\newtheorem{theorem}{Theorem}[section]
\newtheorem{lemma}[theorem]{Lemma}
\newtheorem{corollary}[theorem]{Corollary}
\numberwithin{equation}{section}
\begin{document}

\title[Quillen connection and uniformization of Riemann surfaces]{Quillen connection and the
uniformization of Riemann surfaces}

\author[I. Biswas]{Indranil Biswas}

\address{School of Mathematics, Tata Institute of Fundamental
Research, Homi Bhabha Road, Mumbai 400005, India}

\email{indranil@math.tifr.res.in}

\author[F. F. Favale]{Filippo Francesco Favale}

\address{Dipartimento di Matematica e Applicazioni, Universit\`a degli
Studi di Milano-Bicocca, Via Roberto Cozzi, 55, I-20125 Milano, Italy}

\email{filippo.favale@unimib.it}

\author[G. P. Pirola]{Gian Pietro Pirola}

\address{Dipartimento di Matematica, Universit\`a di Pavia,
via Ferrata 5, I-27100 Pavia, Italy}

\email{gianpietro.pirola@unipv.it}

\author[S. Torelli]{Sara Torelli}

\address{Institut f\"ur Algebraische Geometrie, Leibniz Universit\"at Hannover,
Welfengarten 1, 30167 Hannover, Germany}

\email{torelli@math.uni-hannover.de}

\subjclass[2010]{30F10, 14H15, 53C07, 53B10}

\keywords{Uniformization, projective structure, Quillen connection, torsor}

\begin{abstract}
The Quillen connection on ${\mathcal L}\, \longrightarrow\, {\mathcal M}_g$, where ${\mathcal L}^*$ is the Hodge 
line bundle over the moduli stack of smooth complex projective curves curves ${\mathcal M}_g$, $g\, \geq\, 5$, is 
uniquely determined by the condition that its curvature is the Weil--Petersson form on ${\mathcal M}_g$.
The bundle of holomorphic connections on ${\mathcal L}$ has a unique holomorphic isomorphism with the bundle on 
${\mathcal M}_g$ given by the moduli stack of projective structures. This isomorphism takes the $C^\infty$ section 
of the first bundle given by the Quillen connection on ${\mathcal L}$ to the $C^\infty$ section of the second 
bundle given by the uniformization theorem. Therefore, any one of these two sections determines the other uniquely.
\end{abstract}

\maketitle

\section{Introduction}\label{se1}

A holomorphic $\Omega^1_Z$--torsor over a complex manifold $Z$ is a holomorphic fiber bundle ${\mathcal E}$ over $Z$
on which the holomorphic cotangent bundle $\Omega^1_Z$ acts satisfying the condition that the map
$$
{\mathcal E}\times_Z \Omega^1_Z\, \longrightarrow\, {\mathcal E}\times_Z {\mathcal E},
$$
constructed using the action map ${\mathcal E}\times_Z \Omega^1_Z\, \longrightarrow\, {\mathcal E}$ and the identity map 
of ${\mathcal E}$, is a biholomorphism. This notion extends to smooth Deligne--Mumford stacks.

Here we investigate two natural holomorphic $\Omega^1_{{\mathcal M}_g}$--torsors on the 
moduli stack ${\mathcal M}_g$ of smooth complex projective curves of genus $g$ (throughout we assume that $g\, 
\geq\, 5$). The first $\Omega^1_{{\mathcal M}_g}$--torsor is given by the moduli stack ${\mathcal P}_g$ of genus 
$g$ surfaces equipped with a projective structure. We recall that the projective structures on a Riemann surface 
$X$ is an affine space modeled on $H^0(X,\, K^{\otimes 2}_X)
\,=\, (\Omega^1_{{\mathcal M}_g})_X$. The second $\Omega^1_{{\mathcal M}_g}$--torsor is given by the sheaf of 
holomorphic connections ${\rm Conn}({\mathcal L})$ on the dual $\mathcal L$ of the Hodge line bundle on ${\mathcal 
M}_g$. We recall that the space of holomorphic connections on ${\mathcal L}\big\vert_U$, where $U\, \subset
\, {\mathcal M}_g$ is an affine open subset, is an affine space modeled on the vector space $H^0(U,\, \Omega^1_U)$.

The uniformization theorem gives a $C^\infty$ section
$$
\Psi\,\, :\,\, {\mathcal M}_g \, \longrightarrow\, {\mathcal P}_g\, .
$$
On the other hand, the holomorphic line bundle ${\mathcal L}$ has a complex connection associated to
the Quillen metric on it. This Quillen connection is uniquely determined, among all complex connections on
${\mathcal L}$, by the property that its curvature is
$$
\frac{\sqrt{-1}}{6\pi}\omega_{WP}\, ,
$$
where $\omega_{WP}$ is the Weil--Petersson form on ${\mathcal M}_g$ (see Corollary \ref{cor1}).

We construct from ${\rm Conn}({\mathcal L})$ a new holomorphic $\Omega^1_{{\mathcal M}_g}$--torsor
simply by scaling the action of $\Omega^1_{{\mathcal M}_g}$. More precisely, if
$$
A\,\, :\,\, {\rm Conn}({\mathcal L})\times_{{\mathcal M}_g} \Omega^1_{{\mathcal M}_g}\, \longrightarrow\,
{\rm Conn}({\mathcal L})
$$
is the action of $\Omega^1_{{\mathcal M}_g}$ on ${\rm Conn}({\mathcal L})$, then define the following new
holomorphic action of $\Omega^1_{{\mathcal M}_g}$ on the same holomorphic fiber bundle ${\rm Conn}({\mathcal L})$:
$$
A^t\,\, :\,\, {\rm Conn}({\mathcal L})\times_{{\mathcal M}_g} \Omega^1_{{\mathcal M}_g}\, \longrightarrow\,
{\rm Conn}({\mathcal L})\, ,\ \ (z,\, v)\, \longmapsto\, A\left(z,\, \frac{\sqrt{-1}}{6\pi}\cdot v\right)\, .
$$
The resulting holomorphic $\Omega^1_{{\mathcal M}_g}$--torsor
$({\rm Conn}({\mathcal L}),\, A^t)$ will be denoted by ${\rm Conn}^t({\mathcal L})$.

Let
$$
\Phi\,\, :\,\, {\mathcal M}_g \, \longrightarrow\, {\rm Conn}^t({\mathcal L})\,=\, {\rm Conn}({\mathcal L})
$$
be the $C^\infty$ section given by the Quillen connection on ${\mathcal L}$.

We prove the following (see Theorem \ref{thm1}):

\begin{theorem}\label{thmi}\mbox{}
\begin{enumerate}
\item There is exactly one holomorphic isomorphism between the $\Omega^1_{{\mathcal M}_g}$--torsors
${\rm Conn}^t({\mathcal L})$ and ${\mathcal P}_g$.

\item The holomorphic isomorphism between the $\Omega^1_{{\mathcal M}_g}$--torsors
${\rm Conn}^t({\mathcal L})$ and ${\mathcal P}_g$ takes the above section $\Phi$ of ${\rm Conn}^t({\mathcal L})$
to the section $\Psi$ of ${\mathcal P}_g$ given by the uniformization theorem.
\end{enumerate}
\end{theorem}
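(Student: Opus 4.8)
The plan is to prove the two assertions separately, with the first being essentially formal and the second the real content.

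\medskip

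For part (1), the strategy is standard torsor theory. Two holomorphic $\Omega^1_{{\mathcal M}_g}$--torsors over ${\mathcal M}_g$ are isomorphic if and only if they define the same class in $H^1({\mathcal M}_g,\, \Omega^1_{{\mathcal M}_g})$, and the set of isomorphisms between two isomorphic torsors is then a torsor over $H^0({\mathcal M}_g,\, \Omega^1_{{\mathcal M}_g})$. So I would first recall (or cite) that $H^0({\mathcal M}_g,\, \Omega^1_{{\mathcal M}_g})\,=\,0$ for $g\,\geq\,3$, which immediately gives uniqueness of the isomorphism once existence is known. For existence, I would compute the class of each torsor in $H^1({\mathcal M}_g,\, \Omega^1_{{\mathcal M}_g})$: the class of ${\mathcal P}_g$ is a well-known multiple of the first Chern class of the Hodge line bundle (this is the classical statement that a projective structure exists locally but the obstruction to a global holomorphic one is $c_1$ of an appropriate line bundle), while the class of ${\rm Conn}({\mathcal L})$ is, by the standard Atiyah-sequence description, exactly $c_1({\mathcal L})\,=\,-c_1$ of the Hodge bundle. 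Passing from ${\rm Conn}({\mathcal L})$ to ${\rm Conn}^t({\mathcal L})$ multiplies this class by $\frac{\sqrt{-1}}{6\pi}$. The point is then that the Weil--Petersson form represents, up to the universal constant $\frac{\sqrt{-1}}{6\pi}$, the same cohomology class as the one controlling ${\mathcal P}_g$ — this is precisely the content of the results quoted before the statement (the curvature normalization in Corollary \ref{cor1}) together with Wolpert's theorem that $[\omega_{WP}]$ is proportional to $c_1$ of the Hodge bundle, with the proportionality constant being exactly what makes the two classes agree after the scaling. Matching these constants shows the two torsor classes coincide, hence an isomorphism exists, and by the vanishing of global holomorphic $1$-forms on ${\mathcal M}_g$ it is unique.

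\medskip

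For part (2), the idea is that both sections $\Phi$ and $\Psi$ are $C^\infty$ (not holomorphic) sections of holomorphically isomorphic torsors, and a $C^\infty$ section of an $\Omega^1_{{\mathcal M}_g}$--torsor has a well-defined "$(0,1)$-derivative" — the $\overline\partial$ of the section, which is a $(0,1)$-form with values in $\Omega^1_{{\mathcal M}_g}$, i.e. a Dolbeault representative of the torsor class. Two $C^\infty$ sections of the \emph{same} torsor differ by a global $C^\infty$ section of $\Omega^1_{{\mathcal M}_g}$, and under the isomorphism of part (1), $\Phi$ and $\Psi$ will agree if and only if their $\overline\partial$'s agree as forms (since then their difference is a $\overline\partial$-closed, i.e. holomorphic, global $1$-form, which vanishes). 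So the plan is: compute $\overline\partial\Phi$ and show it equals the transported $\overline\partial\Psi$. The $\overline\partial$ of $\Phi$ is, by definition of the Quillen connection as the Chern connection of the Quillen metric, governed by the curvature of that connection, which by Corollary \ref{cor1} is $\frac{\sqrt{-1}}{6\pi}\omega_{WP}$; after applying the scaling $A^t$ this becomes $\omega_{WP}$ itself. The $\overline\partial$ of $\Psi$ is the $\overline\partial$ of the uniformizing projective structure, and the classical computation (going back to essentially the same circle of ideas as in work of Zograf--Takhtajan) identifies this with $\omega_{WP}$ as well. Thus the two $\overline\partial$'s match, the difference $\Phi-\Psi$ (after transport) is a global holomorphic $1$-form on ${\mathcal M}_g$, which is zero, and the sections coincide.

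\medskip

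The main obstacle I expect is part (2), and within it the precise identification of $\overline\partial\Psi$ — i.e. showing that the "obstruction to holomorphicity" of the Bers/uniformization section of ${\mathcal P}_g$ is exactly (the relevant multiple of) the Weil--Petersson Kähler form, with the \emph{correct constant}. This requires the Ahlfors-type variational formula for how the uniformizing projective structure (equivalently the Schwarzian of the developing map, or the accessory parameters) varies with the complex structure, and careful bookkeeping of the $\frac{1}{\pi}$'s and the factor relating the Weil--Petersson metric to the hyperbolic-metric pairing. The scaling $A^t$ was engineered precisely so that these constants line up, so the real work is verifying that they do; once both $\overline\partial$'s are pinned down, the rest is the soft torsor argument using $H^0({\mathcal M}_g,\,\Omega^1_{{\mathcal M}_g})\,=\,0$.
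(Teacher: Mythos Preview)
Your proposal is correct and rests on the same two substantive inputs the paper uses: the vanishing $H^0({\mathcal M}_g,\Omega^1_{{\mathcal M}_g})=0$ (the paper cites \cite{FPT} and takes $g\geq 5$, not $g\geq 3$), and the identification of the ``$\overline\partial$'' of each section with $\omega_{WP}$ --- via the Quillen curvature \eqref{e5} for $\Phi$ and via Zograf--Takhtajan/Ivanov for $\Psi$.

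The organization, however, differs from the paper's in a way worth noting. You split the argument into (i) abstract existence by matching torsor classes in $H^1({\mathcal M}_g,\Omega^1_{{\mathcal M}_g})$, and then (ii) identification of the sections by comparing $\overline\partial\Phi$ with $\overline\partial\Psi$. The paper instead writes down the $C^\infty$ torsor isomorphism ${\mathbb F}\colon \Phi(X)+w\mapsto \Psi(X)+w$ directly --- so that ${\mathbb F}\circ\Phi=\Psi$ is built in --- and then shows ${\mathbb F}$ is a biholomorphism by checking that the obstruction $(1,1)$-forms of the two sections coincide (both equal $\omega_{WP}$). This yields existence and part~(2) in one stroke and avoids having to compute the class of ${\mathcal P}_g$ independently; in your route, the ``well-known multiple of $c_1$'' you invoke for that class is in practice obtained from the very same $\overline\partial\Psi=\omega_{WP}$ computation, so your step~(i) is not really independent of step~(ii). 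A small slip: rescaling the $\Omega^1_{{\mathcal M}_g}$-action by $\tfrac{\sqrt{-1}}{6\pi}$ multiplies the torsor class by the \emph{inverse} factor, not by $\tfrac{\sqrt{-1}}{6\pi}$ itself; this does not affect the conclusion since the constants still match.
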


We note that from Theorem \ref{thmi} it follows that each of the sections $\Phi$
and $\Psi$ determines the other uniquely.

\section{Quillen metric on a line bundle}

For $g\, \geq\, 5$, let ${\mathcal M}_g$ denote the moduli stack of smooth complex projective curves
of genus $g$. It is an irreducible smooth complex quasiprojective orbifold of dimension $3(g-1)$
\cite{DM}. Moreover, ${\mathcal M}_g$ has a natural K\"ahler structure
\begin{equation}\label{e0}
\omega_{WP}\,\, \in\,\, C^\infty({\mathcal M}_g,\, \Omega^{1,1}_{{\mathcal M}_g})
\end{equation}
which is known as the {\it Weil--Petersson form}.

A torsor over ${\mathcal M}_g$ for the holomorphic cotangent bundle
$\Omega^1_{{\mathcal M}_g}$ is a fiber bundle $${\mathcal E}\, \longrightarrow\, {\mathcal M}_g$$ together
with a morphism
$$
\varpi\,\, :\,\, {\mathcal E}\times_{{\mathcal M}_g} \Omega^1_{{\mathcal M}_g}\, \longrightarrow\, {\mathcal E}
$$
such that
\begin{itemize}
\item $\varpi_X$ is an action of the vector space $(\Omega^1_{{\mathcal M}_g})_X$ on the fiber
${\mathcal E}_X$ for every $X\, \in\, {\mathcal M}_g$, and

\item the map of fiber products
$$
{\mathcal E}\times_{{\mathcal M}_g} \Omega^1_{{\mathcal M}_g}\, \longrightarrow\, {\mathcal E}
\times_{{\mathcal M}_g}{\mathcal E},\, \ \ (e,\, v)\, \longmapsto\, (e,\, \varpi(e,\, v))
$$
is an isomorphism.
\end{itemize}

Let
\begin{equation}\label{e1}
\varphi\, :\, {\mathcal C}_g\, \longrightarrow\, {\mathcal M}_g
\end{equation}
be the universal curve. The line bundle on ${\mathcal M}_g$
$$
{\mathcal L}\, :=\, \det R^1\varphi_*{\mathcal O}_{{\mathcal C}_g}\,=\,
\bigwedge\nolimits^g R^1 \varphi_*{\mathcal O}_{{\mathcal C}_g}
$$
is a generator of ${\rm Pic}({\mathcal M}_g)\,=\, \mathbb Z$ \cite[p.~154, Theorem 1]{AC}.

Let ${\rm Conn}({\mathcal L})\, \longrightarrow\, {\mathcal M}_g$ be the holomorphic fiber bundle given by the
sheaf of holomorphic connections on ${\mathcal L}$. We will briefly recall the construction of
${\rm Conn}({\mathcal L})$. Consider the Atiyah exact sequence
\begin{equation}\label{f1}
0\, \longrightarrow\, {\mathcal O}_{{\mathcal M}_g}\,=\, {\rm Diff}^0({\mathcal L},\, {\mathcal L})
\, \longrightarrow\, {\rm At}({\mathcal L})\, :=\, {\rm Diff}^1({\mathcal L},\, {\mathcal L})
\, \stackrel{p_0}{\longrightarrow}\, T{\mathcal M}_g \, \longrightarrow\, 0\, ,
\end{equation}
where ${\rm Diff}^i({\mathcal L},\, {\mathcal L})$ is the holomorphic vector bundle over ${\mathcal M}_g$
corresponding to the sheaf of holomorphic differential operators of order $i$ from ${\mathcal L}$ to itself,
$T{\mathcal M}_g$ is the holomorphic tangent bundle of ${\mathcal M}_g$, and $p_0$ is the symbol map. For
any open subset $U\, \subset\,
{\mathcal M}_g$, giving a holomorphic connection on ${\mathcal L}\big\vert_U$ is equivalent to giving
a holomorphic splitting of \eqref{f1} over $U$ \cite{At}. Let
\begin{equation}\label{e3}
0\, \longrightarrow\, \Omega^1_{{\mathcal M}_g} \, \longrightarrow\,{\rm At}({\mathcal L})^*
\, \stackrel{\beta}{\longrightarrow}\, {\mathcal O}_{{\mathcal M}_g} \, \longrightarrow\, 0
\end{equation}
be the dual of the sequence in \eqref{f1}. Let ${\mathbf 1}_{{\mathcal M}_g}\, :\, {\mathcal M}_g\, \longrightarrow\,
{\mathcal O}_{{\mathcal M}_g}$ denote the section given by the constant function $1$ on ${\mathcal M}_g$.
Then
\begin{equation}\label{e4}
{\rm At}({\mathcal L})^*\, \supset\, \beta^{-1}({\mathbf 1}_{{\mathcal M}_g}({\mathcal M}_g))\, =:\,
{\rm Conn}({\mathcal L})\, \stackrel{\phi}{\longrightarrow}\, {\mathcal M}_g
\end{equation}
where $\beta$ is the projection in \eqref{e3}. From \eqref{e3} it follows immediately that
${\rm Conn}({\mathcal L})$ is a holomorphic torsor over ${\mathcal M}_g$ for the holomorphic cotangent bundle
$\Omega^1_{{\mathcal M}_g}$. In particular, for any $X\, \in\, {\mathcal M}_g$ the complex vector
space $(\Omega^1_{{\mathcal M}_g})_X$ acts freely transitively on the fiber of ${\rm Conn}({\mathcal L})$
over $X$. Let
\begin{equation}\label{eA}
A\,\, :\,\, {\rm Conn}({\mathcal L})\times_{{\mathcal M}_g} \Omega^1_{{\mathcal M}_g}\, \longrightarrow\,
{\rm Conn}({\mathcal L})
\end{equation}
be the holomorphic map giving the torsor structure. For any $C^\infty$ $(1,\, 0)$-form $\eta$ on ${\mathcal M}_g$, let
\begin{equation}\label{eA2}
A_\eta\,\, :\,\, {\rm Conn}({\mathcal L})\, \longrightarrow\, {\rm Conn}({\mathcal L}),\, \ \
z\, \longmapsto\, A(z,\, \eta(\phi(z)))
\end{equation}
be the $C^\infty$ automorphism of ${\rm Conn}({\mathcal L})$ over ${\mathcal M}_g$, where $\phi$ and $A$ are
the maps in \eqref{e4} and \eqref{eA} respectively.

A {\it complex connection} on ${\mathcal L}$ is a $C^\infty$ connection $\nabla$ on ${\mathcal L}$ such that
the $(0,\,1)$-component $\nabla^{0,1}$ of $\nabla$ coincides with the Dolbeault operator on $\mathcal L$ that defines
the holomorphic structure of $\mathcal L$. The space of complex connections on $\mathcal L$ is in a
natural bijection with the space of $C^\infty$ sections ${\mathcal M}_g\, \longrightarrow\,
{\rm Conn}({\mathcal L})$ of the projection $\phi$ in \eqref{e4}. There is a tautological holomorphic connection
$D^0$ on the line bundle $\phi^*{\mathcal L}$, whose curvature
$\Theta\,=\, {\rm Curv}(D^0)$ is a holomorphic symplectic form on ${\rm Conn}({\mathcal L})$
(see \cite[p.~372, Proposition 3.3]{BHS}).
Any complex connection $\nabla$ on ${\mathcal L}$ satisfies the
equation $$\nabla\,=\, f^*_{\nabla} D^0\, ,$$ where $f_{\nabla}\, :\, {\mathcal M}_g\, \longrightarrow\,
{\rm Conn}({\mathcal L})$ is the $C^\infty$ section corresponding to
$\nabla$. Consequently, the curvature ${\rm Curv}(\nabla)$ of $\nabla$ satisfies the equation
\begin{equation}\label{eA4}
{\rm Curv}(\nabla)\,=\, f^*_{\nabla} \Theta\, .
\end{equation}
We also have
\begin{equation}\label{eA3}
A^*_\eta\Theta\,=\, \Theta+d\eta,
\end{equation}
where $A_\eta$ is the map in \eqref{eA2}. 

Given a Hermitian structure $h_1$ on $\mathcal L$,
there is a unique complex connection on $\mathcal L$ that preserves $h_1$ \cite[p.~11, Proposition 4.9]{Ko};
it is known as the {\it Chern connection}.

Equip the family of Riemann surfaces
${\mathcal C}_g$ in \eqref{e1} with the relative Poincar\'e metric. Also, equip ${\mathcal O}_{{\mathcal
C}_g}$ with the trivial (constant) Hermitian structure; the pointwise norm of the
constant section with value $c$ is the constant $|c|$. These two
together produce a Hermitian structure $h_Q$ on
$\mathcal L$, which is known as the Quillen metric \cite{Qu}, \cite{BGS}. Let
$$
{\rm Curv}(\nabla^Q)\, \in\, C^\infty({\mathcal M}_g,\, \Omega^{1,1}_{{\mathcal M}_g})
$$
be the curvature of the Chern connection $\nabla^Q$ on ${\mathcal L}$ for the Hermitian structure $h_Q$;
this $\nabla^Q$ is known as the {\it Quillen connection}. Then
\begin{equation}\label{e5}
{\rm Curv}(\nabla^Q)\,=\, \frac{\sqrt{-1}}{6\pi}\omega_{WP}\, ,
\end{equation}
where $\omega_{WP}$ is the K\"ahler form in \eqref{e0} \cite[p.~184, Theorem 2]{ZT1}; a much
more general result is proved in \cite[p.~51, Theorem 0.1]{BGS} from which \eqref{e5}
follows immediately. Let
\begin{equation}\label{e6}
\Phi\,\, :\,\, {\mathcal M}_g \, \longrightarrow\, {\rm Conn}({\mathcal L})
\end{equation}
be the $C^\infty$ section of the projection $\phi$ in \eqref{e4} given by 
the above Quillen connection $\nabla^Q$.

\begin{lemma}\label{lem1}
There is exactly one complex connection $\nabla$ on ${\mathcal L}$ such that
the curvature ${\rm Curv}(\nabla)$ of $\nabla$ satisfies the equation
$$
{\rm Curv}(\nabla)\,=\, \,=\, \frac{\sqrt{-1}}{6\pi}\omega_{WP}\,.
$$
\end{lemma}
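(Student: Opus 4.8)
The plan is to establish existence and uniqueness separately, using the Quillen connection $\nabla^Q$ of \eqref{e5} as the witness for existence, and the fact that $H^0({\mathcal M}_g,\, {\mathcal O}_{{\mathcal M}_g})\,=\,\mathbb C$ together with $H^0({\mathcal M}_g,\, \Omega^1_{{\mathcal M}_g})\,=\, 0$ (equivalently, that there are no nonzero global holomorphic $1$-forms on ${\mathcal M}_g$ for $g\,\geq\, 5$) for uniqueness. Existence is immediate: the Quillen connection $\nabla^Q$ is a complex connection on ${\mathcal L}$, and by \eqref{e5} its curvature equals $\frac{\sqrt{-1}}{6\pi}\omega_{WP}$, so $\nabla\,=\,\nabla^Q$ does the job.

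For uniqueness, suppose $\nabla_1$ and $\nabla_2$ are two complex connections on ${\mathcal L}$ with the same curvature $\frac{\sqrt{-1}}{6\pi}\omega_{WP}$. The difference $\nabla_1-\nabla_2$ is a global $C^\infty$ $1$-form $\alpha$ on ${\mathcal M}_g$. Because both connections are complex connections, their $(0,1)$-parts agree (both equal the Dolbeault operator defining the holomorphic structure of ${\mathcal L}$), so $\alpha$ has no $(0,1)$-component; that is, $\alpha$ is a $C^\infty$ $(1,0)$-form on ${\mathcal M}_g$. Comparing curvatures gives ${\rm Curv}(\nabla_1)-{\rm Curv}(\nabla_2)\,=\, d\alpha\,=\, 0$, so $\alpha$ is $d$-closed; writing $d\,=\,\partial+\overline\partial$ and using that $\alpha$ is of type $(1,0)$, the vanishing of the $(0,2)$-part of $d\alpha$ gives $\overline\partial\alpha\,=\, 0$, i.e.\ $\alpha$ is a global holomorphic $1$-form. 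Since ${\mathcal M}_g$ carries no nonzero global holomorphic $1$-forms (this is standard for $g\,\geq\, 3$, and in the torsor language of the introduction it is exactly the statement that the space of global sections of $\Omega^1_{{\mathcal M}_g}$ is trivial, which underlies why the relevant torsors have a unique section up to the rigidity discussed there), we conclude $\alpha\,=\, 0$, hence $\nabla_1\,=\,\nabla_2$.

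The one point requiring care — and the main (mild) obstacle — is the passage to the Deligne--Mumford stack: the argument above uses that $C^\infty$ connections on ${\mathcal L}$ form an affine space over global $(1,0)$-forms, that their curvatures differ by the exterior derivative of the difference, and that ${\mathcal M}_g$ has no nonzero global holomorphic $1$-forms, all of which must be read on the orbifold ${\mathcal M}_g$ rather than on a smooth variety. All three facts hold in this setting: the affine structure and the curvature formula are local differential-geometric statements that transfer verbatim to orbifolds (and are already implicitly used in the discussion around \eqref{eA4} and \eqref{eA3}), and $H^0({\mathcal M}_g,\,\Omega^1_{{\mathcal M}_g})\,=\, 0$ is known. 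Alternatively, one can phrase the whole uniqueness step inside the torsor ${\rm Conn}({\mathcal L})$: the two connections correspond to $C^\infty$ sections $f_{\nabla_1},\, f_{\nabla_2}$ of $\phi$ with $f_{\nabla_1}\,=\, A_\alpha\circ f_{\nabla_2}$ for the $(1,0)$-form $\alpha$, and \eqref{eA4} together with \eqref{eA3} gives ${\rm Curv}(\nabla_1)\,=\,{\rm Curv}(\nabla_2)+d\alpha$; equality of curvatures forces $d\alpha\,=\, 0$, and then holomorphicity of $\alpha$ and the vanishing of global holomorphic $1$-forms finish the proof.
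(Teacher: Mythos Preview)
Your proof is correct and follows essentially the same line as the paper's: existence is witnessed by the Quillen connection via \eqref{e5}, and uniqueness comes from observing that the difference of two such complex connections is a closed $C^\infty$ $(1,0)$-form on ${\mathcal M}_g$, which must vanish (the paper cites both the vanishing of closed $(1,0)$-forms and of holomorphic $1$-forms on ${\mathcal M}_g$, and your torsor reformulation using \eqref{eA3}--\eqref{eA4} is exactly how the paper packages the curvature identity). One small slip to fix: for a $(1,0)$-form $\alpha$, the form $\overline\partial\alpha$ is the $(1,1)$-component of $d\alpha$, not the $(0,2)$-component (which is automatically zero); the conclusion $\overline\partial\alpha=0$ still follows from $d\alpha=0$ by type decomposition, so the argument is unaffected.
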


\begin{proof}
{}From \eqref{e5} we know that the Quillen connection $\nabla^Q$ satisfies this equation.
Let $\nabla$ be another connection on ${\mathcal L}$ satisfying this condition. Consider the
$C^\infty$ $(1,\, 0)$-form $\eta_0\, =\, \nabla^Q-\nabla$ on ${\mathcal M}_g$. From
\eqref{eA3} and \eqref{eA4} it follows that $d\eta_0\,=\, {\rm Curv}(\nabla^Q)-{\rm Curv}(\nabla)\,=\, 0$.

It is known that ${\mathcal M}_g$ does not admit any nonzero closed $(1,\, 0)$-form
(see \cite[p.~228, Theorem 2]{Mu}, \cite[Lemma 1.1]{Ha}). In fact, ${\mathcal M}_g$ does not admit
any nonzero holomorphic $1$-form \cite[Theorem 3.1]{FPT}; recall that $g\, \geq\, 5$. So
we have $\eta_0\,=\,0$, and hence $\nabla^Q\,=\,\nabla$.
\end{proof}

Lemma \ref{lem1} has the following immediate consequence:

\begin{corollary}\label{cor1}
The curvature equation \eqref{e5} uniquely determines the Quillen connection $\nabla^Q$ among the space of all
complex connections on $\mathcal L$.
\end{corollary}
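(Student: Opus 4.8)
The plan is to observe that this is nothing more than a restatement of Lemma \ref{lem1} together with the curvature identity \eqref{e5}. First I would recall from \eqref{e5} that the Quillen connection $\nabla^Q$, being the Chern connection for the Quillen metric $h_Q$, has curvature exactly $\frac{\sqrt{-1}}{6\pi}\omega_{WP}$, so $\nabla^Q$ is a complex connection on $\mathcal L$ satisfying the curvature equation in the statement of Lemma \ref{lem1}. Hence the set of complex connections on $\mathcal L$ whose curvature equals $\frac{\sqrt{-1}}{6\pi}\omega_{WP}$ is nonempty.

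Next I would invoke Lemma \ref{lem1}, which asserts that this set has at most one element: any two complex connections $\nabla$ and $\nabla'$ with the prescribed curvature differ by a $C^\infty$ $(1,0)$-form $\eta_0$ on ${\mathcal M}_g$, and the computation there (using \eqref{eA3} and \eqref{eA4}) shows $d\eta_0 = 0$; since ${\mathcal M}_g$ carries no nonzero closed $(1,0)$-form for $g \geq 5$, we get $\eta_0 = 0$. Combining these two observations, the curvature equation \eqref{e5} singles out $\nabla^Q$ uniquely among all complex connections on $\mathcal L$.

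There is essentially no obstacle here: the corollary is a direct packaging of Lemma \ref{lem1}, whose only nontrivial input is the vanishing of closed holomorphic $1$-forms on ${\mathcal M}_g$, which is already cited. The one point worth stating explicitly is that the hypothesis $g \geq 5$ is used precisely to guarantee $H^0({\mathcal M}_g,\, \Omega^1_{{\mathcal M}_g}) = 0$ (via \cite{FPT}), and that the curvature of a complex connection depends only on its $(1,0)$-part modulo the fixed $\bar\partial$-operator, so that the difference of two such connections is a genuine $(1,0)$-form rather than a general $1$-form.
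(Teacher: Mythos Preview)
Your proposal is correct and matches the paper's treatment: the paper gives no separate proof of Corollary~\ref{cor1}, simply declaring it an immediate consequence of Lemma~\ref{lem1}, which is exactly what you unpack. Your additional remarks about the role of $g\geq 5$ and why the difference of two complex connections is a $(1,0)$-form are accurate and make explicit what the paper leaves implicit.
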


\subsection{Projective structures and uniformization}

Take any smooth complex projective curve $X$. A holomorphic coordinate chart on $X$ is a pair
of the form $(U,\, f)$, where $U\, \subset\, X$ is an analytic open subset and $f\,
:\, U\, \longrightarrow\, {\mathbb C}{\mathbb P}^1$ is a holomorphic embedding. A holomorphic
coordinate atlas on $X$ is a collection of coordinate charts $\{(U_i,\, f_i)\}_{i\in I}$
such that $$X\,=\, \bigcup_{i\in I} U_i\, .$$ A projective structure on $X$ is given by a holomorphic
coordinate atlas $\{(U_i,\, f_i)\}_{i\in I}$ satisfying the condition that
for every $i,\, j\, \in\, I\times I$ with $U_i\bigcap U_j\, \not=\, \emptyset$, and every connected
component $V_c\, \subset\, U_i\bigcap U_j$, there is an element
$A^c_{j,i} \, \in\, \text{Aut}({\mathbb C}{\mathbb P}^1)\,=\, \text{PGL}(2, {\mathbb C})$ such that the
map $(f_j\circ f^{-1}_i)\big\vert_{f_i(V_c)}$ is the restriction of the automorphism $A^c_{j,i}$ of
${\mathbb C}{\mathbb P}^1$ to the open subset $f_i(V_c)$.
Two holomorphic coordinate atlases $\{(U_i,\, f_i)\}_{i\in I}$ and $\{(U_i,\, f_i)\}_{i\in I'}$
satisfying the above condition are called {\it equivalent} if their union
$\{(U_i,\, f_i)\}_{i\in I\cup
I'}$ also satisfies the above condition. A {\it projective structure} on $X$ is an equivalence class of
holomorphic coordinate atlases satisfying the above condition (see \cite{Gu}).

Take the extension $E$
$$
0\, \longrightarrow\, {\mathcal O}_X \, \longrightarrow\, E \, \longrightarrow\, TX \, \longrightarrow\, 0
$$
corresponding to $1\, \in\, H^1(X,\, K_X)\,=\, \mathbb C$. Note that there is exactly one nontrivial extension of 
$TX$ by ${\mathcal O}_X$ up to the scalings of ${\mathcal O}_X$. Giving a projective structure on $X$ is equivalent 
to giving a holomorphic connection on the projective bundle ${\mathbb P}(E)$. More precisely, projective structures 
on $X$ are identified with the quotient of the space of all holomorphic connections on ${\mathbb P}(E)$ by the group 
of all holomorphic automorphisms of ${\mathbb P}(E)$ \cite{BR}, \cite{Gu}. From this it follows that the space of 
all projective structures on $X$ is an affine space modeled on $H^0(X,\, K^{\otimes 2}_X)\,=\, (\Omega^1_{{\mathcal 
M}_g})_X$; see \cite{Gu}, \cite{BR}. Let ${\mathcal P}_g$ denote the space of all pairs $(X,\, \rho)$, where $X\, 
\in\, {\mathcal M}_g$ and $\rho$ is a projective structure on $X$. From the above description of projective 
structures on $X$ in terms of the holomorphic connections on ${\mathbb P}(E)$ it follows that ${\mathcal P}_g$ has a 
natural structure of a Deligne--Mumford stack. Let
\begin{equation}\label{e7}
\psi\, :\, {\mathcal P}_g \, \longrightarrow\, {\mathcal M}_g 
\end{equation}
be the natural projection. We note that ${\mathcal P}_g$ is a holomorphic torsor over ${\mathcal M}_g$
for the cotangent bundle $\Omega^1_{{\mathcal M}_g}$.

Every Riemann surface admits a projective structure. In fact, the uniformization theorem produces a projective 
structure, because the automorphism groups of $\mathbb C$, ${\mathbb C}{\mathbb P}^1$ and the upper-half plane 
$\mathbb H$ are all contained in $\text{PGL}(2, {\mathbb C})$. Consequently, the uniformization theorem produces a 
$C^\infty$ section
\begin{equation}\label{e8}
\Psi\, :\, {\mathcal M}_g \, \longrightarrow\, {\mathcal P}_g
\end{equation}
of the projection $\psi$ in \eqref{e7}. (See \cite{BCFP} for another canonical section of ${\mathcal P}_g$.)

\section{Holomorphic isomorphism of torsors}

We will construct a new holomorphic $\Omega^1_{{\mathcal M}_g}$--torsor from ${\rm Conn}({\mathcal L})$ in
\eqref{e4} by simply scaling the action of $\Omega^1_{{\mathcal M}_g}$, while keeping the
holomorphic fiber bundle unchanged. Define
\begin{equation}\label{f2}
A^t\, :\, {\rm Conn}({\mathcal L})\times_{{\mathcal M}_g} \Omega^1_{{\mathcal M}_g}\, \longrightarrow\,
{\rm Conn}({\mathcal L})\, ,\ \ (z,\, v)\, \longmapsto\, A\left(z,\, \frac{\sqrt{-1}}{6\pi}\cdot v\right)\, ,
\end{equation}
where $A$ is the map in \eqref{eA}; the map $A^t$ is
holomorphic because $A$ is so. The resulting holomorphic $\Omega^1_{{\mathcal M}_g}$--torsor
$({\rm Conn}({\mathcal L}),\, A^t)$ will be denoted by ${\rm Conn}^t({\mathcal L})$. This
${\rm Conn}^t({\mathcal L})$ can be interpreted as the bundle of connections on the (nonexistent)
line bundle ${\mathcal L}^{\otimes \frac{\sqrt{-1}}{6\pi}}$.

The $C^\infty$ section of ${\rm Conn}^t({\mathcal L})$ given by the section $\Phi$ (in \eqref{e6}) of ${\rm 
Conn}({\mathcal L})$ will also be denoted by $\Phi$. Since the two holomorphic fiber bundles
${\rm Conn}^t({\mathcal L})$ and ${\rm Conn}({\mathcal L})$ coincide, this should not cause any confusion.
For the same reason the projection of ${\rm Conn}^t({\mathcal L})$ to ${\mathcal M}_g$ will be denoted by
$\phi$ (as in \eqref{e4}).

A $C^\infty$ \textit{isomorphism} between
the $\Omega^1_{{\mathcal M}_g}$--torsors ${\rm Conn}^t({\mathcal L})$ and ${\mathcal P}_g$ (constructed
in \eqref{e7}) is a diffeomorphism
$$
F\, :\, {\rm Conn}^t({\mathcal L})\, \longrightarrow\, {\mathcal P}_g
$$
such that
\begin{enumerate}
\item $\psi\circ F\,=\, \phi$, where $\psi$ is the projection in \eqref{e7}, and

\item $F(c+w)\,=\, F(c)+ w$, for all $c\, \in\, {\rm Conn}^t({\mathcal L})_X$,
$w\, \in\, (\Omega^1_{{\mathcal M}_g})_X$ and $X\, \in\, {\mathcal M}_g$.
\end{enumerate}

A \textit{holomorphic isomorphism} between the $\Omega^1_{{\mathcal M}_g}$--torsors
${\rm Conn}^t({\mathcal L})$ and ${\mathcal P}_g$ is a $C^\infty$ isomorphism $F$ as above satisfying the
condition that $F$ is a biholomorphism.

\begin{theorem}\label{thm1}\mbox{}
\begin{enumerate}
\item There is exactly one holomorphic isomorphism between the $\Omega^1_{{\mathcal M}_g}$--torsors
${\rm Conn}^t({\mathcal L})$ and ${\mathcal P}_g$.

\item The holomorphic isomorphism between the $\Omega^1_{{\mathcal M}_g}$--torsors
${\rm Conn}^t({\mathcal L})$ and ${\mathcal P}_g$ takes the section $\Phi$ in \eqref{e6} to the section
$\Psi$ in \eqref{e8}.
\end{enumerate}
\end{theorem}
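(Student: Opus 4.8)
The plan is to exploit the fact that both torsors $\mathrm{Conn}^t(\mathcal L)$ and $\mathcal P_g$ are holomorphic $\Omega^1_{\mathcal M_g}$-torsors over the same base, so the set of holomorphic isomorphisms between them (if nonempty) is acted on freely transitively by $H^0(\mathcal M_g,\,\Omega^1_{\mathcal M_g})$: given two such isomorphisms $F_1,F_2$, the difference $F_1-F_2$ (well defined fiberwise since each $F_i$ is torsor-equivariant) is a global holomorphic $1$-form on $\mathcal M_g$, and conversely every global holomorphic $1$-form modifies one isomorphism into another. Hence uniqueness in part (1) follows immediately from the vanishing $H^0(\mathcal M_g,\,\Omega^1_{\mathcal M_g})=0$ for $g\geq 5$ (\cite[Theorem 3.1]{FPT}), which was already invoked in Lemma~\ref{lem1}. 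So the content of part (1) is \emph{existence} of a holomorphic isomorphism. For existence I would argue at the level of extension classes: a holomorphic $\Omega^1_{\mathcal M_g}$-torsor over $\mathcal M_g$ is classified, up to isomorphism, by its class in $H^1(\mathcal M_g,\,\Omega^1_{\mathcal M_g})$ (the class of the corresponding extension $0\to\Omega^1_{\mathcal M_g}\to \mathcal A\to\mathcal O_{\mathcal M_g}\to 0$), and two torsors are holomorphically isomorphic iff they have the same class. Thus I must show that the class of $\mathrm{Conn}^t(\mathcal L)$ equals the class of $\mathcal P_g$. The class of $\mathrm{Conn}(\mathcal L)$ is the Atiyah class $\mathrm{at}(\mathcal L)\in H^1(\mathcal M_g,\,\Omega^1_{\mathcal M_g})$, i.e.\ (up to the usual normalization) the first Chern class $c_1(\mathcal L)$ in Hodge/Dolbeault cohomology; scaling the action by $\tfrac{\sqrt{-1}}{6\pi}$ multiplies the class by $\tfrac{6\pi}{\sqrt{-1}}$, so the class of $\mathrm{Conn}^t(\mathcal L)$ is $\tfrac{6\pi}{\sqrt{-1}}\,\mathrm{at}(\mathcal L)$. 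On the other side, the class of $\mathcal P_g$ is a fixed multiple of the class of the projective-structure torsor, which is classically known to be (a nonzero rational multiple of) $c_1$ of the determinant-of-Hodge bundle as well --- concretely, both are proportional to the generator of $\mathrm{Pic}(\mathcal M_g)\otimes\mathbb Q$ mapped into $H^1(\Omega^1)$, and the Zograf--Takhtajan / Bismut--Gillet--Soulé computation \eqref{e5} pins down the proportionality constant so that the scaling by $\tfrac{\sqrt{-1}}{6\pi}$ is exactly what makes the two classes agree. This is the step I expect to be the main obstacle: getting the constant right, i.e.\ checking that the Weil--Petersson normalization in \eqref{e5} matches the normalization of the projective-structure torsor class. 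I would do this by choosing a $C^\infty$ splitting of each torsor and comparing curvatures: $\Phi$ gives a $C^\infty$ section of $\mathrm{Conn}^t(\mathcal L)$, and its "curvature" (the $(1,1)$-form measuring failure of holomorphicity) is, by \eqref{eA4}, \eqref{eA3} and \eqref{f2}, equal to $\tfrac{6\pi}{\sqrt{-1}}\cdot\mathrm{Curv}(\nabla^Q)=\tfrac{6\pi}{\sqrt{-1}}\cdot\tfrac{\sqrt{-1}}{6\pi}\omega_{WP}=\omega_{WP}$; similarly the uniformizing section $\Psi$ of $\mathcal P_g$ has curvature equal to a known multiple of $\omega_{WP}$ (this is the classical statement, going back to the relation between the Fuchsian projective structure and the Weil--Petersson Kähler form), and the normalizations have been arranged so that this multiple is also $\omega_{WP}$. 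Since the two torsors carry $C^\infty$ sections with equal curvature $(1,1)$-forms, their classes in $H^1(\mathcal M_g,\,\Omega^1_{\mathcal M_g})$ coincide (the class is represented by the $(0,2)$-part of the $\overline\partial$ of any local lift, equivalently detected by the curvature of any $C^\infty$ section modulo $\mathrm{Im}(\partial)$ on a space with no holomorphic $1$-forms), establishing existence.

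Once existence and uniqueness of the holomorphic isomorphism $F\colon\mathrm{Conn}^t(\mathcal L)\to\mathcal P_g$ are in hand, part (2) is the assertion that this unique $F$ sends $\Phi$ to $\Psi$. The plan is again a difference argument: consider the $C^\infty$ section $F\circ\Phi$ of $\mathcal P_g$ and compare it with $\Psi$. Both are $C^\infty$ sections of the same $\Omega^1_{\mathcal M_g}$-torsor $\mathcal P_g$, so their difference $\theta:=F\circ\Phi-\Psi$ is a globally defined $C^\infty$ $(1,0)$-form on $\mathcal M_g$. I want to show $\theta=0$. The key observation is that $F$, being a holomorphic isomorphism of torsors, intertwines the "curvature" operation: the curvature of the section $F\circ\Phi$ of $\mathcal P_g$ equals the curvature of the section $\Phi$ of $\mathrm{Conn}^t(\mathcal L)$, which we computed above to be $\omega_{WP}$; and the curvature of $\Psi$ is, by the classical computation, also $\omega_{WP}$ (this is precisely the compatibility that forced the constant $\tfrac{\sqrt{-1}}{6\pi}$). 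Hence $d\theta = \mathrm{Curv}(F\circ\Phi)-\mathrm{Curv}(\Psi)=\omega_{WP}-\omega_{WP}=0$, so $\theta$ is a closed $(1,0)$-form on $\mathcal M_g$. But $\mathcal M_g$ admits no nonzero closed $(1,0)$-form for $g\geq 5$ (\cite[p.~228, Theorem 2]{Mu}, \cite[Lemma 1.1]{Ha}, and indeed no nonzero holomorphic $1$-form by \cite[Theorem 3.1]{FPT}), exactly as used in Lemma~\ref{lem1}. Therefore $\theta=0$, i.e.\ $F\circ\Phi=\Psi$, which is part (2). The only delicate point here, beyond the curvature-constant bookkeeping already flagged, is to verify carefully that a holomorphic torsor isomorphism really does transport the curvature of a $C^\infty$ section unchanged; this follows formally from condition (2) in the definition of isomorphism (equivariance under the $\Omega^1$-action) together with $d$ being $\mathbb C$-linear, since locally $F$ differs from a holomorphic torsor map by adding a holomorphic $1$-form, whose $d$ vanishes.

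A remark worth inserting at the end: since $F$ is uniquely determined, parts (1) and (2) together show that $\Phi$ and $\Psi$ determine each other --- given $\Psi$, the section $\Phi$ is recovered as $F^{-1}\circ\Psi$, and given $\Phi$, the section $\Psi$ is $F\circ\Phi$ --- which is the statement highlighted after Theorem~\ref{thmi} in the introduction. No further input is needed for this; it is a direct consequence of the bijectivity and uniqueness of $F$.
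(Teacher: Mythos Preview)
Your argument is correct in substance and reaches the same conclusion as the paper, but the packaging is genuinely different. The paper proceeds constructively: it \emph{defines} the map ${\mathbb F}$ by declaring ${\mathbb F}(\Phi(X)+w)=\Psi(X)+w$, so that part~(2) holds by construction, and then spends the work showing that ${\mathbb F}$ is holomorphic by comparing almost complex structures along the image of $\Phi$; the key input is that the obstruction $(1,1)$-forms of $\Phi$ and $\Psi$ both equal $\omega_{WP}$ (\cite[Theorem~1.7]{Iv}, \cite{ZT2} for $\Psi$, and \eqref{e5} for $\Phi$). You instead argue cohomologically: existence of \emph{some} holomorphic isomorphism follows because the two torsors have the same class in $H^1({\mathcal M}_g,\Omega^1_{{\mathcal M}_g})$, detected by the equal obstructions, and then part~(2) is a separate difference argument. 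Your route is more modular and makes the role of $H^1(\Omega^1)$ explicit; the paper's route avoids invoking the torsor classification and gets part~(2) for free, at the cost of a more hands-on verification of \eqref{e10}. The essential analytic input---the matching of obstructions with $\omega_{WP}$---is identical in both.

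One imprecision to clean up in your part~(2): you write $d\theta=\mathrm{Curv}(F\circ\Phi)-\mathrm{Curv}(\Psi)$, but the ``curvature'' you defined is the obstruction $(1,1)$-form, i.e.\ the $\overline\partial$ of the section, not the full exterior derivative. A holomorphic torsor isomorphism intertwines obstructions (this uses only that $F$ is holomorphic and equivariant), so what you actually get is $\overline\partial\theta=0$; the $(2,0)$-part $\partial\theta$ is not controlled by this computation. But $\overline\partial\theta=0$ already says $\theta$ is a holomorphic $1$-form, and then $\theta=0$ by \cite[Theorem~3.1]{FPT}, which you cite anyway. (To literally conclude $d\theta=0$ you would need $F$ to intertwine holomorphic symplectic forms on the two torsors, which is extra structure you have not set up and do not need.) With this adjustment your argument is complete.
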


\begin{proof}
We will first prove that there is at most one holomorphic isomorphism between the
two $\Omega^1_{{\mathcal M}_g}$--torsors ${\rm Conn}^t({\mathcal L})$ and ${\mathcal P}_g$. To prove this,
for $i\, =\, 1,\, 2$, let
$$
F_i\, :\, {\rm Conn}^t({\mathcal L})\, \longrightarrow\, {\mathcal P}_g
$$
be a holomorphic isomorphism. Consider the difference
$$
F_1-F_2\, :\, {\rm Conn}^t({\mathcal L})\, \longrightarrow\, \Omega^1_{{\mathcal M}_g}\, ,\ \ c\, \longmapsto\,
F_1(c)-F_2(c)
$$
defined using the $\Omega^1_{{\mathcal M}_g}$--torsor structure on ${\mathcal P}_g$. Since
$F_i(c+w)\,=\, F_i(c)+ w$ for all $c\, \in\, {\rm Conn}^t({\mathcal L})_X$,
$w\, \in\, (\Omega^1_{{\mathcal M}_g})_X$ and $X\, \in\, {\mathcal M}_g$, we conclude that
$$(F_1-F_2)(c+w)\,=\, (F_1-F_2)(c)\, .$$ Consequently, $F_1-F_2$ descends to a holomorphic $1$-form on
${\mathcal M}_g$. But there is no nonzero holomorphic $1$-form on ${\mathcal M}_g$
\cite[Theorem 3.1]{FPT}; recall that $g\, \geq\, 5$. This implies that $F_1\,=\, F_2$. In other words,
there is at most one holomorphic isomorphism between the
two $\Omega^1_{{\mathcal M}_g}$--torsors ${\rm Conn}^t({\mathcal L})$ and ${\mathcal P}_g$.

We will now construct a $C^\infty$ isomorphism $\mathbb F$ between the
two $\Omega^1_{{\mathcal M}_g}$--torsors ${\rm Conn}^t({\mathcal L})$ and ${\mathcal P}_g$.
Take any $X\, \in\, {\mathcal M}_g$ and any $c\, \in\, \phi^{-1}(X)\,\subset\, {\rm Conn}^t({\mathcal L})
\,=\, {\rm Conn}({\mathcal L})$, where $\phi$ as before is the projection of ${\rm Conn}^t({\mathcal L})$
to ${\mathcal M}_g$. So $c\,=\, \Phi(X)+w$, where $\Phi$ is the section in
\eqref{e6} and $w\, \in\, (\Omega^1_{{\mathcal M}_g})_X$. Now define
$$
{\mathbb F}(c)\,=\, \Psi(X)+ w\, ,
$$
where $\Psi$ is the section in \eqref{e8}. This produces a map
\begin{equation}\label{e9}
{\mathbb F}\, :\, {\rm Conn}^t({\mathcal L})\, \longrightarrow\, {\mathcal P}_g\, .
\end{equation}
It is straightforward to check that this map ${\mathbb F}$ is a $C^\infty$ isomorphism between the
two $\Omega^1_{{\mathcal M}_g}$--torsors ${\rm Conn}^t({\mathcal L})$ and ${\mathcal P}_g$.

It is evident that ${\mathbb F}(\Phi({\mathcal M}_g))\,=\, \Psi({\mathcal M}_g)$.

To complete the proof we need to show that $\mathbb F$ is a biholomorphism.

The real tangent bundles of ${\rm Conn}^t({\mathcal L})$ and ${\mathcal P}_g$ will be denoted by
$T^{\mathbb R}{\rm Conn}^t({\mathcal L})$ and $T^{\mathbb R}{\mathcal P}_g$ respectively. Let
$$
d{\mathbb F}\, :\, T^{\mathbb R}{\rm Conn}^t({\mathcal L})\, \longrightarrow\,{\mathbb F}^*T^{\mathbb R}{\mathcal P}_g
$$
be the differential of the map ${\mathbb F}$ in \eqref{e9}.
Let $J_C$ (respectively, $J_P$) be the almost complex structure on ${\rm Conn}^t({\mathcal L})$
(respectively, ${\mathcal P}_g$). Since ${\mathbb F}$ is a diffeomorphism, to prove that
$\mathbb F$ is a biholomorphism it is enough to show that
\begin{equation}\label{e10}
d{\mathbb F}\circ J_C\,=\, ({\mathbb F}^*J_P)\circ (d{\mathbb F})
\end{equation}
as maps from $T^{\mathbb R}{\rm Conn}^t({\mathcal L})$ to ${\mathbb F}^*T^{\mathbb R}{\mathcal P}_g$; the
automorphism of ${\mathbb F}^*T^{\mathbb R}{\mathcal P}_g$ given by the automorphism $J_P$ of
$T^{\mathbb R}{\mathcal P}_g$ is denoted by ${\mathbb F}^*J_P$.

Take any point $X\, \in\, {\mathcal M}_g$. The restriction of $\mathbb F$ to $\phi^{-1}(X)\, \subset\, {\rm 
Conn}^t({\mathcal L})$ is a biholomorphism with $\psi^{-1}(X)$, where $\psi$ is the map in \eqref{e7}. More 
precisely, this restriction is an isomorphism of affine spaces modeled on the vector space $(\Omega^1_{{\mathcal 
M}_g})_X$. Therefore, the equation in \eqref{e10} holds for the subbundle of $T^{\mathbb R}{\rm Conn}^t({\mathcal 
L})$ given by the relative tangent bundle for the projection $\phi$ to ${\mathcal M}_g$.

For convenience, the image in ${\rm Conn}^t({\mathcal L})$ of the map $\Phi$ (see \eqref{e6})
will be denoted by $Y$. Let
\begin{equation}\label{e11}
\iota\, :\, Y\, \hookrightarrow\, {\rm Conn}^t({\mathcal L})
\end{equation}
be the inclusion map. Since $\Phi$ is just a $C^\infty$ section, this $Y$ does not inherit any
complex structure from ${\rm Conn}^t({\mathcal L})$. Note that $Y$ can be given a complex structure, because
the restriction of the projection $\phi$ (see \eqref{e4}) to $Y$ is a diffeomorphism of $Y$ with
${\mathcal M}_g$, so the complex structure on ${\mathcal M}_g$ produces a complex structure on $Y$. It should
be clarified that for this complex structure on $Y$ the inclusion map $\iota$ in \eqref{e11} is not holomorphic,
because the section $\Phi$ is not holomorphic.

Using the differential $d\iota\, :\, T^{\mathbb R}Y \, \longrightarrow\,\iota^* T^{\mathbb R}{\rm Conn}^t({\mathcal L})$
of the embedding $\iota$ in \eqref{e11} the tangent bundle $T^{\mathbb R} Y$ is realized as a $C^\infty$ subbundle
of $\iota^* T^{\mathbb R}{\rm Conn}^t({\mathcal L})$. So we have
\begin{equation}\label{e12}
T^{\mathbb R} Y\, \subset\, \iota^* T^{\mathbb R}{\rm Conn}^t({\mathcal L})\, \subset\,
T^{\mathbb R}{\rm Conn}({\mathcal L})\, .
\end{equation}

To prove \eqref{e10}, take any point $\gamma\, \in\, {\rm Conn}^t({\mathcal L})$, and any tangent vector
\begin{equation}\label{ev}
v\, \in\, T^{\mathbb R}_\gamma {\rm Conn}^t({\mathcal L})
\end{equation}
at $\gamma$. We noted earlier that \eqref{e10} holds for the relative tangent bundle for the projection
$\phi$ to ${\mathcal M}_g$. So we assume that $v$ is not vertical for the projection $\phi$.

Denote $\phi(\gamma)\, \in\, {\mathcal M}_g$ by $z$,
and also denote $\Phi(z)\, \in\, {\rm Conn}^t({\mathcal L})$ by $\delta$. So we have
\begin{equation}\label{e13}
w_0 \, :=\, \gamma-\delta \, \in\, (\Omega^1_{{\mathcal M}_g})_z
\end{equation}
using the $\Omega^1_{{\mathcal M}_g}$--torsor structure of ${\rm Conn}^t({\mathcal L})$ (see \eqref{f2}). Let
${\widetilde v}\, \in\, T^{\mathbb R}_z {\mathcal M}_g$ be the image of $v$ in \eqref{ev} by the differential
$d\phi$ of $\phi$. Let
\begin{equation}\label{e14}
u\, \in\, T^{\mathbb R}_\delta Y
\end{equation}
be the image of ${\widetilde v}$ by the differential $d\Phi$ of $\Phi$.

Take a holomorphic $1$-form $w$ defined on some analytic neighborhood $U$ of $z\, \in\, {\mathcal M}_g$.
Then $w$ defines a biholomorphism
\begin{equation}\label{tw}
{\mathcal T}_w\, :\, \phi^{-1}(U)\, \longrightarrow\, \phi^{-1}(U)\, ,\ \ \alpha\,\longmapsto\, \alpha+
w(\phi(\alpha))\, ;
\end{equation}
here the $\Omega^1_{{\mathcal M}_g}$--torsor structure of ${\rm Conn}^t({\mathcal L})$ is used.
Now choose the $1$-form $w$ such that
\begin{itemize}
\item $w(z)\,=\, w_0$; see \eqref{e13} (this condition is clearly equivalent to the condition that
${\mathcal T}_w(\delta) \,=\, \gamma$), and 

\item the differential $d{\mathcal T}_w$ of ${\mathcal T}_w$ takes $u$ in \eqref{e14} to $v$ (see \eqref{ev}).
\end{itemize}
Note that since $v$ is not vertical for the projection $\phi$, such a $1$-form $w$ exists.
We have the following biholomorphism of $\psi^{-1}(U)\, \subset\, {\mathcal P}_g$, where $\psi$ is the
projection in \eqref{e7}:
$$
{\mathcal T}^1_w\, :\, \psi^{-1}(U)\, \longrightarrow\, \psi^{-1}(U)\, ,\ \ \alpha\,\longmapsto\, \alpha+
w(\psi(\alpha)).
$$
{}From the construction of ${\mathbb F}$ in \eqref{e9} it follows immediately that
\begin{equation}\label{e15}
{\mathbb F}\circ {\mathcal T}_w\,=\, {\mathcal T}^1_w\circ {\mathbb F}
\end{equation}
as maps from $\phi^{-1}(U)\, \subset\, {\rm Conn}^t({\mathcal L})$ to $\psi^{-1}(U)
\, \subset\, {\mathcal P}_g$.

Since ${\mathcal T}_w$ (respectively, ${\mathcal T}^1_w$)
is a biholomorphism, its differential $d{\mathcal T}_w$ (respectively, $d{\mathcal T}^1_w$) preserves
the almost complex structure $J_C$ (respectively, $J_P$) on $\phi^{-1}(U)$ (respectively, $\psi^{-1}(U)$).
Therefore, from \eqref{e15} we conclude the following:

Take any point $\mu\, \in\, \phi^{-1}(U)\, \subset\,
{\rm Conn}^t({\mathcal L})$. Then \eqref{e10} holds for all vectors in the tangent space
$T^{\mathbb R}_\mu {\rm Conn}^t({\mathcal L})$ if and only if \eqref{e10} holds for all vectors in the tangent space
$T^{\mathbb R}_{\mu+w(\phi (\mu))}{\rm Conn}^t({\mathcal L})$, where $w$ is the
above holomorphic $1$-form. More precisely, \eqref{e10} holds for a tangent vector
$v_0\, \in\, T^{\mathbb R}_\mu {\rm Conn}^t({\mathcal L})$ if and only if \eqref{e10} holds for 
$$(d{\mathcal T}_w)(v_0)\,\in\, T^{\mathbb R}_{\mu+w(\phi (\mu))}{\rm Conn}^t({\mathcal L})\, ,$$ where
$d{\mathcal T}_w\, :\, T^{\mathbb R}\phi^{-1}(U)\, \longrightarrow\, T^{\mathbb R}\phi^{-1}(U)$ is the
differential of the map ${\mathcal T}_w$ in \eqref{tw}.

Setting $\mu\,=\, \delta$ and $v_0\,=\, u$ (see \eqref{e13} and \eqref{e14}) in the above statement we obtain that
\eqref{e10} holds for $u\, \in\,
T^{\mathbb R}_\delta Y\, \subset\,
T_\delta {\rm Conn}^t({\mathcal L})$ if and only if \eqref{e10} holds for
$v\, \in\, T_\gamma {\rm Conn}^t({\mathcal L})$ in \eqref{ev}.

Consequently, to prove \eqref{e10} it suffices to establish it for all tangent vectors in the subspace
$T^{\mathbb R}Y$ in \eqref{e12}.

Let $q\, :\, {\mathcal V}\, \longrightarrow\, {\mathcal M}_g$ be a holomorphic $\Omega^1_{{\mathcal M}_g}$--torsor
on ${\mathcal M}_g$, and let $S\, :\, {\mathcal M}_g\, \longrightarrow\, {\mathcal V}$ be
a $C^\infty$ section of ${\mathcal V}$. From these we will construct a $C^\infty$ $(1,\,1)$-form
on ${\mathcal M}_g$. The almost complex structures on $\mathcal V$ and ${\mathcal M}_g$
will be denoted by $J_V$ and $J_M$ respectively. Let $$dS\, :\, T^{\mathbb R}{\mathcal M}_g
\, \longrightarrow\, T^{\mathbb R} {\mathcal V}$$ be the differential of the map $S$.
Take any $X\, \in\, {\mathcal M}_g$ and any $v\, \in\, T^{\mathbb R}_X{\mathcal M}_g$. Define
$$
\widetilde{S}(v)\, :=\, dS(J_M(v))- (J_V\circ dS)(v)\,\in\, T^{\mathbb R}_{S(X)} {\mathcal V}\ .
$$
Since $q$ is holomorphic, and $q\circ S\,=\, {\rm Id}_{{\mathcal M}_g}$, it can be shown that
the tangent vector $\widetilde{S}(v)$ is vertical for the projection $q$. Indeed, we have
$$
dq(dS(J_M(v))- (J_V\circ dS)(v))\,=\, dq(dS(J_M(v))) - dq((J_V\circ dS)(v))
$$
$$
=\, d{\rm Id}_{{\mathcal M}_g}(J_M(v)) - J_M(dq((dS)(v)))\,=\, J_M(v) -J_M(v) \,=\, 0\, .
$$
So $\widetilde{S}(v)$ is vertical for the projection $q$. On the
other hand, using the $\Omega^1_{{\mathcal M}_g}$--torsor structure of $\mathcal V$, the
vertical tangent space at $S(X)\, \in\, {\mathcal V}$ is identified with $(\Omega^1_{{\mathcal M}_g})_X$. Also,
$T^{\mathbb R}_X{\mathcal M}_g$ is identified with the real vector
space underlying $(\Omega^{0,1}_{{\mathcal M}_g})_X\,=\,
(\overline{\Omega}^1_{{\mathcal M}_g})_X$. Using these we have $$\widetilde{S}\, \in\,
C^\infty({\mathcal M}_g,\, \Omega^{1,1}_{{\mathcal M}_g})$$ \cite{Iv}.
Note that $S$ is a holomorphic section if and only if $\widetilde{S}\,=\, 0$. This form
$\widetilde{S}$ is called the {\it obstruction} for $S$ to be holomorphic (see \cite{Iv}).

The obstruction for the section $\Psi$ in \eqref{e8} to be holomorphic is the Weil--Petersson form
$\omega_{WP}$ in \eqref{e0} \cite[p.~214, Theorem 1.7]{Iv}, \cite{ZT2}. On the other hand, the obstruction for the
section $\Phi$ of ${\rm Conn}({\mathcal L})$ in \eqref{e6} to be holomorphic is the $(1,\, 1)$-component of
the curvature of the connection on
$\mathcal L$ corresponding to $\Phi$. From \eqref{e5} we know that this curvature itself is of type $(1,\, 1)$
and it is $\frac{\sqrt{-1}}{6\pi}\omega_{WP}$. Now from \eqref{f2} we conclude that the
the obstruction for the section $\Phi$ of ${\rm Conn}^t({\mathcal L})$ to be holomorphic is the
Weil--Petersson form $\omega_{WP}$. Comparing the obstructions for the
sections $\Phi$ and $\Psi$ we conclude that \eqref{e10} holds for all tangent vectors in the subspace
$T^{\mathbb R}Y$ in \eqref{e12}, because the two obstructions coincide. This completes the proof.
\end{proof}

\section*{Acknowledgements}

We would like to thank Alessandro Ghigi for helpful remarks and discussions on the topic of the article. The second 
and third named authors are partially supported by INdAM - GNSAGA. The second named author is partially supported 
by ``2017-ATE-0253'' (Dipartimento di Matematica e Applicazioni - Universit\`a degli Studi di Milano-Bicocca). The 
third named author is partially supported by PRIN 2017\emph{``Moduli spaces and Lie theory''} and by (MIUR): 
Dipartimenti di Eccellenza Program (2018-2022) - Dept. of Math. Univ. of Pavia.
The first named author is partially supported by a J. C. Bose Fellowship.

\end{document}